\let\TeXchi\chi
\newbox\chibox
\chibox \hbox{\raise\dp0 \box 0 }
\def\chi{\copy\chibox}
\newtheorem{proposition}{Proposition}[section]
\newtheorem{theorem}{Theorem}[section]
\newtheorem{definition}{Definition}[section]
\newtheorem{example}{Example}[section]
\newtheorem{corollary}{Corollary}[section]
\newtheorem{problem}{Problem}[section]
\numberwithin{equation}{section}
\numberwithin{theorem}{section}
\numberwithin{definition}{section}
\numberwithin{example}{section}
\numberwithin{proposition}{section}
\numberwithin{lemma}{section}
\numberwithin{remark}{section}
\newcommand\blfootnote[1]{%
  \begingroup
  \renewcommand\thefootnote{}\footnote{#1}%
  \addtocounter{footnote}{-1}%
  \endgroup
}
\begin{document}
\title{On the critical points of entire functions}
\author
{Manuel Norman}
\date{}
\maketitle
\begin{abstract}
\noindent Several years ago, Aziz and Zargar, while considering some questions related to Sendov's conjecture, solved a problem posed by Brown (see [1,2]), showing that any complex polynomial of degree $n$ with a single zero at $z=0$ does not have any critical point in $B(0,1/n)$. More recently, this result has been generalised in [3] by Zargar and Ahmad. The aim of our paper is to extend the result to some classes of complex entire functions. We will show that, under some conditions on the zeros $a_n$ of $f$, $f'$ has no roots in $B(0,t) \setminus \lbrace 0 \rbrace$ for a certain $t$ depending on the values of $|a_n|$.
\end{abstract}
\blfootnote{Author: \textbf{Manuel Norman}; email: manuel.norman02@gmail.com\\
\textbf{AMS Subject Classification (2020)}: 30C15\\
\textbf{Key Words}: critical points, entire function}
\section{Introduction}
Since its formulation, Sendov's conjecture has attracted the attention of many mathematicians. The question is to prove that, given a complex polynomial $p$ with roots inside the unit disk, for each zero $z_k$ of $p$ there exists a critical point (i.e. a zero of $p'$) $\xi_j$ such that the distance $|z_k - \xi_j| \leq 1$. The fact that the critical points are inside the unit disk easily follows by Gauss-Lucas Theorem, so this conjecture is an extension of such important result. We refer to [4] for a proof of Sendov's conjecture for polynomials with degree $n<9$ and to [5] for the recent proof of Tao, which settled all the degrees large enough.\\
This problem also led to some other interesting questions related to the critical points of (complex) polynomials. In particular, Brown [2] noticed that the polynomial $z(z-1)^n$ has a critical point with absolute value $1/n$, and conjectured that any $p \in \mathbb{C}[z]$ of the form:
\begin{equation}\label{Eq:1.1}
p(z)=z \prod_{k=1}^{n-1} (z-z_k)
\end{equation}
with $|z_k| \geq 1$ for all $k$, has no critical point with absolute value $<1/n$.\\
Ten years later, Aziz and Zargar came out with a proof of this conjecture:
\begin{proposition}[Aziz-Zargar]\label{Prop:1.1}
Let $p \in \mathbb{C}[z]$ be a polynomial as in \eqref{Eq:1.1}, with $|z_k| \geq 1$ for all $k$. Then, $p$ has no critical point in the ball $B(0,1/n):=\lbrace z \in \mathbb{C}: |z|<1/n \rbrace$.
\end{proposition}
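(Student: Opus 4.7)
My plan is to exploit the logarithmic derivative of $p$ together with the triangle inequality. Since $p(z) = z \prod_{k=1}^{n-1}(z-z_k)$ is of degree $n$, on the set where $p(z) \neq 0$ we have
\[
\frac{p'(z)}{p(z)} = \frac{1}{z} + \sum_{k=1}^{n-1} \frac{1}{z - z_k}.
\]
First I would dispose of the point $z = 0$: a direct computation gives $p'(0) = (-1)^{n-1}\prod_{k=1}^{n-1} z_k$, which is nonzero because $|z_k| \geq 1$ forces $z_k \neq 0$, so $z = 0$ is not a critical point.

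Next I would argue by contradiction: suppose $z_0 \in B(0,1/n)$, $z_0 \neq 0$, satisfies $p'(z_0) = 0$. Since $|z_0| < 1/n \leq 1 \leq |z_k|$, we have $z_0 \neq z_k$ for every $k$, so $p(z_0) \neq 0$ and the identity above applies. Rearranging $p'(z_0)/p(z_0) = 0$ gives
\[
\frac{1}{z_0} = \sum_{k=1}^{n-1} \frac{1}{z_k - z_0}.
\]
Taking absolute values and applying the reverse triangle inequality $|z_k - z_0| \geq |z_k| - |z_0| \geq 1 - |z_0| > 0$ to each summand yields
\[
\frac{1}{|z_0|} \leq \sum_{k=1}^{n-1} \frac{1}{|z_k - z_0|} \leq \frac{n-1}{1 - |z_0|}.
\]

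Cross-multiplying (both sides positive) turns this into $1 - |z_0| \leq (n-1)|z_0|$, i.e.\ $|z_0| \geq 1/n$, contradicting the assumption $z_0 \in B(0,1/n)$. This closes the argument. The proof is essentially a one-line computation once the logarithmic derivative is invoked; there is no real obstacle, only the recognition that the hypothesis $|z_k| \geq 1$ is used in precisely the right place, namely to bound $|z_k - z_0|$ from below by $1 - |z_0|$ uniformly in $k$. The sharpness of the bound is visible from $p(z) = z(z-1)^{n-1}$, whose only nonzero critical point lies exactly at $z = 1/n$, which suggests the same logarithmic-derivative/triangle-inequality scheme should be the natural starting point when generalising to entire functions later in the paper.
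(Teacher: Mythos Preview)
Your argument is correct. The paper itself does not supply a proof of Proposition~1.1; it is merely quoted as the theorem of Aziz and Zargar from reference~[1], so there is nothing in the paper to compare against directly. That said, your logarithmic-derivative-plus-triangle-inequality scheme is precisely the template the paper adopts for its own main result (Theorem~3.1): there one writes $f'(z)/f(z)=m/z+g'(z)+\sum_n(\cdots)$ and shows that for $0<|z|<t$ the term $m/z$ dominates the rest in absolute value, so $f'(z)\neq 0$. Your closing remark anticipating this is exactly right, and your proof of the polynomial case is the natural specialisation of that argument.
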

Actually, Aziz and Zargar also proved that polynomials with a multiple zero at $z=0$ (say, with multiplicity $m$) and degree $n$ have no critical points in $B(0,m/n) \setminus \lbrace 0 \rbrace$. This result has been extended to the $j$-th derivative of a polynomial in [3], where Zargar and Ahmad obtained the following generalisation:
\begin{proposition}[Zargar-Ahmad]\label{Prop:1.2}
Let $p \in \mathbb{C}[z]$ be of the form:
\begin{equation}\label{Eq:1.2}
p(z)=z^m \prod_{k=1}^{n-m} (z-z_k)
\end{equation}
with $m \geq 1$ integer, and with $|z_k| \geq 1$ for all $k$. Then, $P^{(j)}$ ($1 \leq j \leq m$) has no zero $z$ such that:
$$ 0 < |z| < \prod_{k=0}^{j-1} \frac{m-k}{n-k} = \frac{m(m-1) \cdot \cdot \cdot (m-j+1)}{n(n-1) \cdot \cdot \cdot (n-j+1)}$$
\end{proposition}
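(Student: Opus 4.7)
The plan is to prove the proposition by induction on $j$, extending the logarithmic-derivative argument that underlies Proposition~\ref{Prop:1.1}. To simplify the bookkeeping I write $t_j := \prod_{i=0}^{j-1}\frac{m-i}{n-i}$, so the claim becomes: $p^{(j)}$ has no zero in $B(0,t_j)\setminus\{0\}$.

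The base case $j=1$ is precisely the multiplicity-$m$ refinement of Aziz and Zargar mentioned right after Proposition~\ref{Prop:1.1}: every nonzero zero of $p'$ has modulus at least $m/n = t_1$.

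For the inductive step, assume the result holds for some $1 \leq j < m$, and set $q(z) := \prod_{k=1}^{n-m}(z-z_k)$. Since $|z_k|\geq 1$ forces $q(0)\neq 0$, the Leibniz rule shows that $0$ is a zero of $p^{(j)}$ of multiplicity exactly $m-j$, hence
$$p^{(j)}(z) = c\, z^{m-j}\prod_{k=1}^{n-m}(z-w_k)$$
for some nonzero constant $c$ and nonzero $w_k\in\mathbb{C}$. The inductive hypothesis, applied to the nonzero zeros of $p^{(j)}$, forces $|w_k|\geq t_j$ for every $k$. Now let $z_0$ be a nonzero zero of $p^{(j+1)}$. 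If $p^{(j)}(z_0)=0$ then $z_0=w_k$ for some $k$, so $|z_0|\geq t_j \geq t_{j+1}$; likewise if $|z_0|\geq t_j$ the bound is immediate. Otherwise $0<|z_0|<t_j$ and the identity $p^{(j+1)}(z_0)/p^{(j)}(z_0)=0$ reads
$$\frac{m-j}{z_0}+\sum_{k=1}^{n-m}\frac{1}{z_0-w_k}=0,$$
whence the triangle inequality combined with $|z_0-w_k|\geq |w_k|-|z_0|\geq t_j-|z_0|$ gives
$$\frac{m-j}{|z_0|}\leq \frac{n-m}{t_j-|z_0|},$$
which rearranges to $|z_0|\geq \frac{m-j}{n-j}\, t_j = t_{j+1}$, closing the induction.

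The argument is essentially bookkeeping at each step: the constants $(1,m,n)$ in the Aziz--Zargar estimate for $p$ get replaced by $(t_j, m-j, n-j)$ when the same estimate is applied to $p^{(j)}$, and this is exactly what produces the new factor $\frac{m-j}{n-j}$ in the recursion. No step is truly difficult; the only things to watch are the degenerate cases ($|z_0|\geq t_j$ or $p^{(j)}(z_0)=0$, both trivial) and the monotonicity $t_{j+1}\leq t_j$ needed to chain the bound, which follows from $m\leq n$.
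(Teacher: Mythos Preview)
The paper does not actually prove Proposition~\ref{Prop:1.2}; it is quoted as a known result of Zargar and Ahmad from reference~[3], so there is no ``paper's own proof'' to compare against.

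That said, your argument is correct and is exactly the expected one: induct on $j$, use that $p^{(j)}$ has the shape $c\,z^{m-j}\prod_{k=1}^{n-m}(z-w_k)$ with $|w_k|\ge t_j$ by the inductive hypothesis, and then rerun the logarithmic-derivative estimate with the triple $(1,m,n)$ replaced by $(t_j,\,m-j,\,n-j)$ to pick up the extra factor $\frac{m-j}{n-j}$. Your handling of the degenerate cases ($p^{(j)}(z_0)=0$ or $|z_0|\ge t_j$) and the monotonicity $t_{j+1}\le t_j$ is clean. The only stylistic remark is that the base case $j=1$ is itself just one more instance of the same logarithmic-derivative computation (with $t_0=1$), so you could absorb it into the induction rather than cite it; but since the paper explicitly attributes that case to Aziz and Zargar, invoking it is perfectly legitimate here.
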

In this paper we consider a more general problem: what can we say about other kinds of functions? In order to give an answer to this question, we first need to find a class of functions which satisfies a result similar to the Fundamental Theorem of Algebra. To this aim, we make use of a well known result of Weierstrass, which assures us that entire functions can be written as an infinite product of elementary factors which determine precisely the nonzero roots of $f$, and two other factors (namely, $z^m$ and $e^{g(z)}$ for some positive integer $m$ and some entire function $g$). In the next section, we will recall some important tools from complex analysis which will allow us to prove an extension of the above results.
\section{Preliminaries}
We first recall the following definition:
\begin{definition}\label{Def:2.1}
A complex function $f: \mathbb{C} \rightarrow \mathbb{C}$ of a single variable is called entire if it is holomorphic on the whole complex plane.
\end{definition}
The reason why we consider such functions is the following fundamental result of Weierstrass, which can be found in any textbook on complex analysis (see, for instance, [6,7]):
\begin{theorem}[Weierstrass Factorisation Theorem]\label{Thm:2.1}
Let $f$ be an entire function, and denote by $a_n$ its nonzero roots. Then, there exist a non-negative integer $m$, an entire function $g$ and a sequence of integers $\lbrace p_n \rbrace_{n=1}^{\infty}$ such that:
\begin{equation}\label{Eq:2.1}
f(z)=z^m e^{g(z)} \prod_{n=1}^{\infty} E_{p_n} \left( \frac{z}{a_n} \right) 
\end{equation}
Moreover, $f$ has, as roots, precisely the $a_n$'s and (if $m \geq 1$) $0$.
\end{theorem}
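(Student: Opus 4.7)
The plan is to follow the standard three-step construction. First, I would introduce the Weierstrass elementary factors
\[
E_0(z) := 1-z, \qquad E_p(z) := (1-z)\exp\!\left(z + \frac{z^2}{2} + \cdots + \frac{z^p}{p}\right) \quad (p \geq 1),
\]
and establish the key estimate $|1 - E_p(z)| \leq |z|^{p+1}$ for $|z| \leq 1$. This follows from the direct computation $E_p'(z) = -z^p \exp(z + \cdots + z^p/p)$, which shows that the Taylor expansion of $1-E_p$ at the origin begins at order $p+1$ with nonnegative coefficients; these coefficients sum to $1$ because $E_p(1)=0$. Using this bound, I would argue that any sequence of non-negative integers $\{p_n\}$ satisfying $\sum_{n \geq 1} (r/|a_n|)^{p_n+1} < \infty$ for every $r > 0$ guarantees that the product
\[
P(z) := \prod_{n=1}^{\infty} E_{p_n}(z/a_n)
\]
converges absolutely and uniformly on compact subsets of $\mathbb{C}$. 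Since the zeros $a_n$ of an entire function (listed with multiplicity) can have no finite accumulation point and so $|a_n| \to \infty$ in the infinite case, such a sequence always exists; for instance, $p_n = n$ works universally.

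Next, I would verify that $P$ is entire and vanishes exactly at the $a_n$ with the prescribed multiplicities. On any compact set only finitely many of the factors $E_{p_n}(z/a_n)$ vanish, while the tail of the product is a uniform limit of nowhere-vanishing holomorphic functions and is therefore nowhere-vanishing by Hurwitz's theorem. I would then form the quotient $h(z) := f(z)/(z^m P(z))$. Because numerator and denominator share the same zero locations and multiplicities, every singularity of $h$ is removable, so $h$ extends to a nowhere-zero entire function.

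Finally, since $\mathbb{C}$ is simply connected, the nonvanishing entire function $h$ admits a holomorphic logarithm $g(z) := \log h(z)$, obtained by integrating $h'/h$ along any path from $0$ to $z$ and fixing a branch at the origin. Then $h = e^{g}$ with $g$ entire, yielding the factorisation $f(z) = z^m e^{g(z)} \prod_{n=1}^{\infty} E_{p_n}(z/a_n)$ as required. The main obstacle is the uniform convergence of the infinite product in the first step: it is the only point requiring genuinely analytic estimates, and the cleverness of the argument lies entirely in the design of the elementary factors, which absorb the leading Taylor coefficients of $-\log(1-z)$ so that $|1 - E_p(z)|$ vanishes to order $p+1$ at the origin. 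Once convergence is secured, everything downstream reduces to standard tools: Hurwitz's theorem, the removable singularities theorem, and the existence of holomorphic logarithms on simply connected domains.
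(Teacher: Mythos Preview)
The paper does not give a proof of this statement at all: Theorem~2.1 is stated as a preliminary result and immediately attributed to standard references (Conway, Rudin). Your proposal is precisely the classical textbook argument found in those references --- elementary-factor estimate, convergence of the product via $\sum (r/|a_n|)^{p_n+1}<\infty$, Hurwitz to locate the zeros, removable singularities for the quotient, and a holomorphic logarithm on the simply connected domain $\mathbb{C}$ --- and it is correct as written. There is nothing to compare, since the paper simply cites the result; your outline matches exactly what one finds in the cited sources.
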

Here, the factors $E_k(z)$ (usually called \textit{elementary factors}) are defined as follows ($k$ non-negative integer):
\begin{equation}\label{Eq:2.2}
    E_k(z):=
    \begin{cases}
      (1-z), & \text{if}\ k=0 \\
      (1-z) \exp \left( \sum_{j=1}^{k} \frac{z^j}{j} \right), & \text{otherwise}
    \end{cases}
\end{equation}
Clearly, polynomials are a special class of entire functions, and indeed, by taking $g(z) \equiv 0$ and $p_n=0$ for all $n$, the expression of $f$ written above reduces to a polynomial (of course, the number of $a_n$ is finite).\\
In addition, there is also another important theorem, which will be used to give some examples of applications of our main result.
\begin{theorem}\label{Thm:2.2}
Let $a_n \neq 0$ (for all $n$) be a sequence such that $|a_n| \rightarrow +\infty$. If $\lbrace p_n \rbrace$ is a sequence of integers such that for all $a>0$:
$$ \sum_{n=1}^{\infty} \left( \frac{a}{|a_n|} \right)^{1+p_n} < +\infty$$
then the function:
$$ f(z):= \prod_{n=1}^{\infty} E_{p_n} \left( \frac{z}{a_n} \right) $$
is entire. In particular, it follows that:
$$ r(z):= z^m e^{g(z)} \prod_{n=1}^{\infty} E_{n} \left( \frac{z}{a_n} \right) $$
is entire for any entire function $g$.
\end{theorem}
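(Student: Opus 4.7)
The plan is to establish the standard uniform-convergence criterion for infinite products of holomorphic factors, using the classical estimate on the elementary factors $E_k$, and then specialise to the case $p_n = n$ to conclude the last sentence.

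First I would recall (or quickly verify from the power series expansion of $(1-z)\exp(\sum_{j=1}^k z^j/j)$) the fundamental inequality
\[
|1 - E_k(w)| \leq |w|^{k+1} \qquad \text{for all } |w| \leq 1,
\]
valid for every non-negative integer $k$. This is the key analytic input and will be the engine of the whole argument; I would sketch its derivation by differentiating $E_k$, noting that $E_k'(w) = -w^k \exp(\sum_{j=1}^k w^j/j)$ has a power series with non-negative coefficients, and then integrating from $0$ to get the bound on $1-E_k(w)$.

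Next, to prove the main statement, I would fix an arbitrary $R>0$ and consider $z$ with $|z|\leq R$. Because $|a_n|\to +\infty$, there exists $N=N(R)$ with $|a_n|\geq R$, hence $|z/a_n|\leq 1$, for all $n\geq N$. For such $n$ the estimate above gives
\[
\left| 1 - E_{p_n}\!\left(\frac{z}{a_n}\right)\right| \leq \left|\frac{z}{a_n}\right|^{1+p_n} \leq \left(\frac{R}{|a_n|}\right)^{1+p_n}.
\]
By hypothesis, $\sum_{n\geq N}(R/|a_n|)^{1+p_n}<+\infty$, so the series of the $|1-E_{p_n}(z/a_n)|$ converges uniformly on $\overline{B(0,R)}$. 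The standard theorem on infinite products of holomorphic functions (uniform absolute convergence of $\sum|1-f_n|$ on compacts implies $\prod f_n$ converges uniformly on compacts to a holomorphic function) then shows that $f$ is holomorphic on $B(0,R)$. Since $R$ was arbitrary, $f$ is entire.

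For the second assertion, I would check that the choice $p_n = n$ satisfies the hypothesis. Given $a>0$, pick $N$ so large that $|a_n|\geq 2a$ for all $n\geq N$; then for such $n$,
\[
\left(\frac{a}{|a_n|}\right)^{1+n} \leq \left(\frac{1}{2}\right)^{1+n},
\]
and the comparison with the geometric series gives convergence. Hence $\prod_{n=1}^{\infty} E_n(z/a_n)$ is entire by the first part, and multiplying by the entire functions $z^m$ and $e^{g(z)}$ preserves this property, so $r$ is entire as claimed.

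The only genuinely delicate point is the inequality $|1-E_k(w)|\leq |w|^{k+1}$ for $|w|\leq 1$; the remainder of the proof is a routine Weierstrass $M$-test argument combined with the basic theorem on holomorphic infinite products, both of which I would invoke without further comment.
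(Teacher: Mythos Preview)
Your proof is correct and is precisely the standard textbook argument (as in Conway or Rudin). Note, however, that the paper does not actually give a proof of this statement: Theorem~2.2 is stated in the Preliminaries section as a known result and is used as a black box, with the references [6,7] supplying the details. So there is nothing to compare against beyond observing that your argument is exactly the one those references contain.
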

We are now ready to state and prove our main result.
\section{Main result}
As for critical points of polynomials (or, more generally, zeros of derivatives of polynomials), the question of determining the distribution and the number of roots of (either real or complex) entire functions has led to many significant results. We refer to [8-13] for more on these topics.\\
In this section, we will prove our main Theorem, which shows that under some suitable conditions, entire functions do not have critical points in punctured balls centred in the origin and of a certain radius, thus extending the result of Aziz and Zargar. In Section 4, instead, we will consider a generalisation of the result of Zargar and Ahmad.
\begin{theorem}\label{Thm:3.1}
Let $f : \mathbb{C} \rightarrow \mathbb{C}$ be a complex entire function, and suppose that it can be written via a Weierstrass factorisation
$$ f(z)=z^m e^{g(z)} \prod_{n=1}^{\infty} E_{p_n} \left( \frac{z}{a_n} \right) $$
with $m$, $g$ and $a_n$ satisfying the following properties:\\
(i) $m \geq 1$ is a positive integer;\\
(ii) $|a_n| \geq h(n) > t$ for some constant $t>0$ and for some function $h: \mathbb{N} \rightarrow \mathbb{R}^{+}$;\\
(iii) $p_n \not \rightarrow 0$ and $\sum_{n=1}^{\infty} \frac{1}{h(n)} < +\infty$ (in particular, $h(n) \rightarrow +\infty$);\\
(iv) for all $z$ such that $0<|z|<t$, the following inequality holds:
$$ |g'(z)| + \sum_{n=1}^{\infty} \frac{1}{h(n) - t} + \sum_{n=1}^{\infty} \frac{1-(t/h(n))^{p_n}}{h(n)-t} < m/t $$
Then, $f'$ has no zeros $z$ in $B(0,t) \setminus \lbrace 0 \rbrace$.
\end{theorem}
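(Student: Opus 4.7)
The strategy is to exploit the logarithmic derivative of $f$, since the Weierstrass factorisation behaves well under $\log$. The key computational identity is that for each elementary factor,
\[
\frac{E_{p_n}'(w)}{E_{p_n}(w)} = -\frac{w^{p_n}}{1-w},
\]
which one verifies directly from $E_{p_n}(w)=(1-w)\exp\bigl(\sum_{j=1}^{p_n}w^j/j\bigr)$: the $-\exp(\cdots)$ produced by $(1-w)'$ combines with $(1-w)\bigl(1+w+\cdots+w^{p_n-1}\bigr)\exp(\cdots)$, and the geometric-series telescoping leaves exactly $-w^{p_n}\exp(\cdots)$. Differentiating the Weierstrass product term by term (justified on compact subsets of $B(0,t)$ by (iii); see the last paragraph) and applying the chain rule to $z\mapsto z/a_n$, I would obtain
\[
\frac{f'(z)}{f(z)}=\frac{m}{z}+g'(z)-\sum_{n=1}^{\infty}\frac{(z/a_n)^{p_n}}{a_n-z}
\]
valid for every $z\in B(0,t)\setminus\{0\}$.

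Suppose now, for contradiction, that $f'(z_0)=0$ for some $z_0$ with $0<|z_0|<t$. Because $|z_0/a_n|<t/h(n)<1$, the factor $1-z_0/a_n$ is nonzero for every $n$ and so is the exponential part of each $E_{p_n}(z_0/a_n)$; combined with $z_0^m e^{g(z_0)}\neq 0$ this gives $f(z_0)\neq 0$. The displayed equation above therefore collapses to
\[
\frac{m}{z_0}=-g'(z_0)+\sum_{n=1}^{\infty}\frac{(z_0/a_n)^{p_n}}{a_n-z_0}.
\]
Taking moduli and using $|a_n-z_0|\geq |a_n|-|z_0|\geq h(n)-t$ together with $|(z_0/a_n)^{p_n}|\leq (t/h(n))^{p_n}\leq 1$, one arrives at
\[
\frac{m}{|z_0|}\leq |g'(z_0)|+\sum_{n=1}^{\infty}\frac{(t/h(n))^{p_n}}{h(n)-t}.
\]
Since $(t/h(n))^{p_n}\leq 2-(t/h(n))^{p_n}$, the right-hand side is bounded above by $|g'(z_0)|+\sum_n 1/(h(n)-t)+\sum_n \bigl(1-(t/h(n))^{p_n}\bigr)/(h(n)-t)$, which by (iv) is strictly smaller than $m/t$. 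But $|z_0|<t$ forces $m/|z_0|>m/t$, the desired contradiction.

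The main point deserving care is the justification of the termwise logarithmic differentiation and the absolute convergence of $\sum (z/a_n)^{p_n}/(a_n-z)$ uniformly on compact subsets of $B(0,t)$. For $|z|\leq t-\varepsilon$, each summand is bounded in modulus by $1/(h(n)-t+\varepsilon)$, which is summable thanks to $\sum 1/h(n)<\infty$ from (iii) (together with $h(n)\to+\infty$, so that $h(n)-t+\varepsilon$ is comparable to $h(n)$ for large $n$). Once this uniform convergence is in place, differentiating $\log f$ term by term is standard, and the rest of the argument is the elementary chain of bounds displayed above; the only bookkeeping trick is to rewrite $(t/h(n))^{p_n}$ as $1-(1-(t/h(n))^{p_n})$ so as to recognise the exact expression appearing in hypothesis (iv).
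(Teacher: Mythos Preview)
Your proof is correct and follows the same overall strategy as the paper: compute the logarithmic derivative $f'/f$, isolate the $m/z$ term, and show that on $B(0,t)\setminus\{0\}$ the remaining piece has modulus strictly less than $m/t<m/|z|$, so $f'/f$ cannot vanish.

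The one noteworthy difference is cosmetic but worth mentioning. The paper writes the contribution of each elementary factor as the two-term expression
\[
\frac{1}{z-a_n}+\sum_{k=1}^{p_n}\frac{z^{k-1}}{a_n^{k}},
\]
bounds the two resulting series separately, and thereby arrives directly at the two sums appearing in hypothesis (iv). You instead collapse this to the compact identity $E_{p_n}'(w)/E_{p_n}(w)=-w^{p_n}/(1-w)$, obtaining the single series $\sum_n (z/a_n)^{p_n}/(a_n-z)$ and the sharper bound $\sum_n (t/h(n))^{p_n}/(h(n)-t)$. This is cleaner, but then you must artificially inflate via $(t/h(n))^{p_n}\le 2-(t/h(n))^{p_n}$ just to reproduce the exact expression in (iv); the paper's split, though less elegant, matches the hypothesis on the nose. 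Your argument also dispenses with the paper's preliminary case split (which is in fact redundant, since under (iv) the first case never occurs) and proceeds straight to the contradiction, which is a small gain in efficiency.
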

\begin{proof}
A simple calculation shows that:
$$ \frac{f'(z)}{f(z)} = \frac{m}{z} + g'(z) + \sum_{n=1}^{\infty} \left( \frac{1}{z-a_n}+ \sum_{k=1}^{p_n} \frac{z^{k-1}}{a_n^{k}} \right) $$
Henceforth, we will only consider $z$ such that $0<|z|<t$. If for some $z$ we have:
$$ \left|g'(z) + \sum_{n=1}^{\infty} \left( \frac{1}{z-a_n}+ \sum_{k=1}^{p_n} \frac{z^{k-1}}{a_n^{k}} \right) \right| > \frac{m}{|z|} $$
then:
$$ \left|\frac{f'(z)}{f(z)} \right| = \left|\frac{m}{z} + g'(z) + \sum_{n=1}^{\infty} \left( \frac{1}{z-a_n}+ \sum_{k=1}^{p_n} \frac{z^{k-1}}{a_n^{k}} \right) \right| \geq \left| \left|g'(z) + \sum_{n=1}^{\infty} \left( \frac{1}{z-a_n}+ \sum_{k=1}^{p_n} \frac{z^{k-1}}{a_n^{k}} \right) \right| - \left| \frac{m}{z} \right| \right| =$$
$$ = \left|g'(z) + \sum_{n=1}^{\infty} \left( \frac{1}{z-a_n}+ \sum_{k=1}^{p_n} \frac{z^{k-1}}{a_n^{k}} \right) \right| - \left| \frac{m}{z} \right| > 0 $$
where the last strict inequality follows by the assumption. Hence, in such cases, $z$ is not a zero of $f'$. Therefore, we can assume that $z$ satisfies:
$$ \left|g'(z) + \sum_{n=1}^{\infty} \left( \frac{1}{z-a_n}+ \sum_{k=1}^{p_n} \frac{z^{k-1}}{a_n^{k}} \right) \right| \leq \left| \frac{m}{z} \right| $$
Now, observe that:
$$ \frac{1}{|z-a_n|} \leq \frac{1}{h(n)-t} $$
since $0<|z|<t$ and $|a_n| \geq h(n) >t$ for all $n$. Hence:
$$ \left|\sum_{n=1}^{\infty} \frac{1}{z-a_n} \right| \leq \sum_{n=1}^{\infty} \frac{1}{|z-a_n|} \leq  \sum_{n=1}^{\infty} \frac{1}{h(n)-t}  $$
where the last series converges by the asymptotic criterion for series ($\frac{1}{h(n)} \sim \frac{1}{h(n)-t}$ when $n \rightarrow +\infty$, and the series with term $\frac{1}{h(n)}$ converges by assumption).\\
Furthermore, we have:
$$ \left| \sum_{n=1}^{\infty} \sum_{k=1}^{p_n} \frac{z^{k-1}}{a_n^{k}} \right| \leq \sum_{n=1}^{\infty} \left| \sum_{k=1}^{p_n} \frac{z^{k-1}}{a_n^{k}} \right| \leq \sum_{n=1}^{\infty} \sum_{k=1}^{p_n} \left|\frac{z^{k-1}}{a_n^{k}} \right| \leq \sum_{n=1}^{\infty} \sum_{k=1}^{p_n} \frac{t^{k-1}}{h(n)^{k}} $$
The finite sum inside the series is a geometric sum, so we get:
$$ \left| \sum_{n=1}^{\infty} \sum_{k=1}^{p_n} \frac{z^{k-1}}{a_n^{k}} \right| \leq \frac{1}{t} \sum_{n=1}^{\infty} \frac{t}{h(n)} \frac{(1-(t/h(n))^{p_n})}{1-t/h(n)} = \sum_{n=1}^{\infty} \frac{(1-(t/h(n))^{p_n})}{h(n)-t} $$
Since $t$ is constant and $h(n) \rightarrow \infty$, $t/h(n) \rightarrow 0$ and therefore (recall that $p_n \not \rightarrow 0$ by hypothesis):
$$ \frac{(1-(t/h(n))^{p_n})}{h(n)-t} \sim \frac{1}{h(n)} $$
Thus, again by the asymptotic criterion for series, we conclude that the last infinite sum converges. Moreover, we can split the first series in its two parts, getting the same as the series at the beginning.\\
Now, by assumption and by what we have just shown:
$$ \left| g'(z) + \sum_{n=1}^{\infty} \frac{1}{z-a_n} +  \sum_{n=1}^{\infty} \sum_{k=1}^{p_n} \frac{z^{k-1}}{a_n^{k}} \right| \leq |g'(z)| + \left| \sum_{n=1}^{\infty} \frac{1}{z-a_n} \right| +  \left| \sum_{n=1}^{\infty} \sum_{k=1}^{p_n} \frac{z^{k-1}}{a_n^{k}} \right| \leq $$
$$ \leq |g'(z)| + \sum_{n=1}^{\infty} \frac{1}{h(n) - t} + \sum_{n=1}^{\infty} \frac{1-(t/h(n))^{p_n}}{h(n)-t} < m/t$$
But $|z|<t$, so that:
$$ \left| g'(z) + \sum_{n=1}^{\infty} \frac{1}{z-a_n} +  \sum_{n=1}^{\infty} \sum_{k=1}^{p_n} \frac{z^{k-1}}{a_n^{k}} \right| < \frac{m}{t} < \left| \frac{m}{z} \right| $$
In particular, it follows that:
$$ \left|\frac{f'(z)}{f(z)} \right| = \left| \frac{m}{z} + g'(z) + \sum_{n=1}^{\infty} \left( \frac{1}{z-a_n}+ \sum_{k=1}^{p_n} \frac{z^{k-1}}{a_n^{k}} \right) \right| \geq $$
$$ \geq \left| \frac{m}{z} \right| - \left|g'(z) + \sum_{n=1}^{\infty} \left( \frac{1}{z-a_n}+ \sum_{k=1}^{p_n} \frac{z^{k-1}}{a_n^{k}} \right) \right| > \frac{m}{t} - \left| g'(z) + \sum_{n=1}^{\infty} \left( \frac{1}{z-a_n}+ \sum_{k=1}^{p_n} \frac{z^{k-1}}{a_n^{k}} \right) \right| >0$$
Thus, $f'$ has no zeros $z$ with $0<|z|<t$.
\end{proof}
We now use Theorem \ref{Thm:2.2} to give a special case of the above result.
\begin{corollary}\label{Crl:3.1}
Let $g$ be any entire function, let $m \geq 1$ be an integer and let $a_n \neq 0$ be a sequence of complex numbers satisfying, for all $n$:
$$ |a_n| \geq h(n) > t $$
for some function $h: \mathbb{N} \rightarrow \mathbb{R}^{+}$ such that $\sum_{n=1}^{\infty} 1/h(n) < +\infty $ and some constant $t>0$. Define:
$$ M:= \max_{z: |z| < t} |g'(z)| $$
and
$$ R:= \frac{2M}{m/t - 2 \sum_{n=1}^{\infty} \frac{1}{h(n) - t}} $$
where
$$ m/t > 2 \sum_{n=1}^{\infty} \frac{1}{h(n) - t} $$
Then, letting
$$ \theta(z):= \frac{g(z)}{R} $$
the entire function
$$ f(z):= z^m e^{\theta(z)} \prod_{n=1}^{\infty} E_n \left( \frac{z}{a_n} \right) $$
has no critical point in $B(0,t) \setminus \lbrace 0 \rbrace$.
\end{corollary}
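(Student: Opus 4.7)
The plan is to reduce the corollary to a direct application of Theorem~\ref{Thm:3.1} with the particular choice $p_n := n$ for every $n$, and then verify that the explicit formula for $R$ has been engineered precisely so that hypothesis (iv) of that theorem is satisfied. Before invoking Theorem~\ref{Thm:3.1} one also has to check that $f$ is entire; for any $a>0$ the bound $a/|a_n| \le a/h(n) \to 0$ gives $\sum_n (a/|a_n|)^{n+1} < +\infty$, so Theorem~\ref{Thm:2.2} produces an entire product $\prod_n E_n(z/a_n)$. Since $\theta = g/R$ is entire and $m\ge 1$, the whole function $f$ is then entire, with the prescribed zero at $0$ of multiplicity $m$ and nonzero zeros exactly the $a_n$.

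With this in place I would verify the four hypotheses of Theorem~\ref{Thm:3.1}. Conditions (i) and (ii) are part of the setup of the corollary. Condition (iii) follows because $p_n=n$ does not tend to $0$ and $\sum_n 1/h(n)$ converges by assumption. The substantive step is condition (iv). Here I would use two elementary estimates valid for all $z$ with $0<|z|<t$: first, $|\theta'(z)| = |g'(z)|/R \le M/R$ by the definition of $M$; and second, $\tfrac{1-(t/h(n))^{p_n}}{h(n)-t} \le \tfrac{1}{h(n)-t}$, since $0 \le t/h(n) < 1$ and $p_n \ge 1$. Combining these,
$$|\theta'(z)| + \sum_{n=1}^{\infty}\frac{1}{h(n)-t} + \sum_{n=1}^{\infty}\frac{1-(t/h(n))^{p_n}}{h(n)-t} \;\le\; \frac{M}{R} + 2\sum_{n=1}^{\infty}\frac{1}{h(n)-t}.$$

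Substituting the definition of $R$ gives $M/R = \tfrac12\bigl(m/t - 2\sum_n 1/(h(n)-t)\bigr)$, so the right-hand side above collapses to $\tfrac{m}{2t} + \sum_n \tfrac{1}{h(n)-t}$, which is strictly less than $m/t$ precisely because of the standing assumption $m/t > 2\sum_n 1/(h(n)-t)$. Thus (iv) holds and Theorem~\ref{Thm:3.1} yields the conclusion that $f'$ has no zero in $B(0,t)\setminus\{0\}$. The only delicate point — and what I would flag as the main obstacle, although it ultimately dissolves — is making sure the factor $2$ in the denominator of $R$ is matched by the factor $2$ in the hypothesis $m/t > 2\sum_n 1/(h(n)-t)$: this matching is exactly what preserves a strict margin in the final inequality, and weakening either constant would break the bound.
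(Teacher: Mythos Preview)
Your argument is correct and follows essentially the same route as the paper: invoke Theorem~\ref{Thm:2.2} to see that $f$ is entire, then check hypotheses (i)--(iv) of Theorem~\ref{Thm:3.1} with $p_n=n$, bounding the third sum by $\sum_n 1/(h(n)-t)$ and substituting the definition of $R$ to obtain $m/(2t)+\sum_n 1/(h(n)-t)<m/t$. Your write-up is in fact slightly more explicit than the paper's in justifying the entireness of the product and the inequality $(1-(t/h(n))^{p_n})/(h(n)-t)\le 1/(h(n)-t)$.
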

\begin{proof}
Apply Theorem \ref{Thm:3.1} to the complex function $f$ above. Notice that, by Theorem \ref{Thm:2.2}, $f$ is entire. Clearly, (i) and (ii) are satisfied by hypothesis. Moreover, $p_n=n \not \rightarrow 0$ and hence even (iii) holds true. To conclude, it is easy to see that:
$$ |\theta'(z)| \, + \, \sum_{n=1}^{\infty} \frac{1}{h(n)-t} \, + \, \sum_{n=1}^{\infty} \frac{1-(t/h(n))^{p_n}}{h(n)-t} = |g'(z)/R| \, + \, \sum_{n=1}^{\infty} \frac{1}{h(n)-t} \, + $$
$$+ \, \sum_{n=1}^{\infty} \frac{1-(t/h(n))^{p_n}}{h(n)-t} \leq  M/R \, + \, 2\sum_{n=1}^{\infty} \frac{1}{h(n)-t} = \frac{1}{2} \left( m/t - 2 \sum_{n=1}^{\infty} \frac{1}{h(n) - t}  \right) + $$
$$+ \, 2\sum_{n=1}^{\infty} \frac{1}{h(n)-t} = \frac{m}{2t} \,  + \, \sum_{n=1}^{\infty} \frac{1}{h(n)-t} < m/t $$
where the last inequality follows by the hypothesis. Of course, $\theta$ is entire, so that all the assumptions are satisfied, and hence $f'$ has no critical point in the punctured ball $B(0,t) \setminus \lbrace 0 \rbrace$.
\end{proof}
We conclude this section with some examples.
\begin{example}\label{Ex:3.1}
\normalfont Suppose that $|a_n| \geq n^2 =: h(n)$ (this example can be generalised by replacing the exponent $2$ with any real number $c>1$). Then:
$$ \sum_{n=1}^{\infty} \frac{1}{h(n)} = \zeta(2) = \frac{\pi^2}{6} $$
converges. Consider $m = 13$ and $t=\frac{9}{10}$ (say). It can be shown that:
$$ \sum_{n=1}^{\infty} \frac{1}{h(n) - t} = \frac{5}{9} - \frac{\sqrt{5}}{3 \sqrt{2}} \pi \cot \left( \frac{3 \pi}{\sqrt{10}} \right) \approx 10.737... $$
while (using Mathematica)
$$ \sum_{n=1}^{\infty} \frac{1-\left( \frac{9}{10n^2} \right)^n}{n^2 - \frac{9}{10}} \approx 1.72043...$$
Hence:
$$ 13.568541 < \frac{5}{9} - \frac{\sqrt{5}}{3 \sqrt{2}} \pi \cot \left( \frac{3 \pi}{\sqrt{10}} \right) + \sum_{n=1}^{\infty} \frac{1-\left( \frac{9}{10n^2} \right)^n}{n^2 - \frac{9}{10}} + \frac{10}{9} < 13.569552 $$
Consequently, taking, for instance:
$$ g(z):=z^3-2z^2 $$
and noting that:
$$ M:= \max_{z: |z| < 9/10} |g'(z)| \leq \frac{603}{100}  $$
we can conclude that the function \footnote{Any number $\geq 13.7846$ (in place of $14$) would work as well.}
$$ f(z):= z^{13} e^{z^3/14 -z^2/7} \prod_{n=1}^{\infty} E_n \left( \frac{z}{a_n} \right) $$
has no critical point inside the punctured ball $B(0,9/10) \setminus \lbrace 0 \rbrace$ (this follows from a slight and simple modification of the previous argument). For instance, if $a_n=n^2$ for all $n$, the function:
$$ f_1(z):=z^{13} e^{z^3/14-z^2/7} \prod_{n=1}^{\infty} E_n \left( \frac{z}{n^2} \right) $$
has no critical point in such punctured ball.
\end{example}
\begin{example}\label{Ex:3.2}
\normalfont Suppose that the nonzero roots $a_n$ satisfy:
$$ |a_n| \sim h(n) $$
when $n \rightarrow + \infty$ for some function $h$ such that the seris of its reciprocals converges. Then, by the definition itself of limit, for $n$ large enough (say, $n \geq n_0$) there exists a constant $b>0$ for which $|a_n| \geq h(n) - b >0$. Hence, applying Corollary \ref{Crl:3.1}, for any entire function $g$, the following function has no zeros inside $B(0,t) \setminus \lbrace 0 \rbrace$, with $t>0$ constant such that $|a_n| > 2t$ for all $n$:
$$ f(z):= z^m e^{g(z)/R} \prod_{n=1}^{\infty} E_n \left( \frac{z}{a_n} \right) $$
with
$$ R> \frac{2M}{m/t - 2 \left( \sum_{n=1}^{n_0 -1} \frac{1}{2t-t} + \sum_{n=1}^{\infty} \frac{1}{h(n)-b} \right)}  = \frac{2M}{m/t - 2 \left( \frac{n_0 -1}{t} + \sum_{n=1}^{\infty} \frac{1}{h(n)-b} \right)}  $$
and $M$ defined as the maximum of $|g'(z)|$ over $z: |z|<t$ (we assume that the RHS is $>0$).
\end{example}
By Example \ref{Ex:3.2}, we derive the following corollary of Theorem \ref{Thm:3.1}:
\begin{corollary}\label{Crl:3.2}
Let $g$ be any entire function and let $a_n \neq 0$ be a sequence of complex numbers such that, for $n \rightarrow +\infty$:
$$ |a_n| \sim h(n) $$
for some function $h: \mathbb{N} \rightarrow \mathbb{R}^{+}$ for which $\sum_{n=1}^{\infty} 1/h(n) < +\infty $. Moreover, let $t>0$ satisfy $|a_n| > ct$ ($c>1$) for all $n$. Then, for every positive integer $m \geq 1$ sufficiently large, there exists a constant $L_0$ such that, for every real number $L \geq L_0$, the following entire function has no critical point in $B(0,t) \setminus \lbrace 0 \rbrace$:
$$ f(z):= z^m e^{g(z)/L} \prod_{n=1}^{\infty} E_n \left( \frac{z}{a_n} \right) $$
\end{corollary}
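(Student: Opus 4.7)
The plan is to reduce to Theorem~\ref{Thm:3.1} after replacing the auxiliary function $h$ with a modified version $\tilde h$ satisfying the strict lower bound $|a_n|\ge\tilde h(n)>t$ required by hypothesis~(ii), and after choosing the exponent scaling $1/L$ large enough that the contribution of $g'$ becomes negligible. For the construction of $\tilde h$, note that $|a_n|\sim h(n)$ implies $|a_n|\ge h(n)-b$ for any fixed $b>0$ and all $n$ beyond some index $n_0$, where $n_0$ may be enlarged so that simultaneously $h(n)-b>t$ for $n\ge n_0$. For the finitely many indices $n<n_0$ the assumption $|a_n|>ct$ with $c>1$ gives $|a_n|>t$; setting $\tilde h(n):=ct$ for $n<n_0$ and $\tilde h(n):=h(n)-b$ for $n\ge n_0$ produces a function with $|a_n|\ge\tilde h(n)>t$ for every $n$, while summability of $1/\tilde h(n)$ is inherited from $\sum 1/h(n)<+\infty$ via the asymptotic comparison $1/(h(n)-b)\sim 1/h(n)$.

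Next I would bound the tail quantities appearing in hypothesis~(iv). By the same asymptotic arguments used in the proof of Theorem~\ref{Thm:3.1},
$$S:=\sum_{n=1}^\infty\frac{1}{\tilde h(n)-t}+\sum_{n=1}^\infty\frac{1-(t/\tilde h(n))^n}{\tilde h(n)-t}$$
is a finite positive constant depending only on $t$ and $\tilde h$ (both summands are asymptotically comparable to $1/\tilde h(n)$, and $p_n=n\not\to 0$). Since $g$ is entire, $M:=\sup_{|z|\le t}|g'(z)|<+\infty$ by continuity on the closed disc, and hypothesis~(iv) applied to $z^m e^{g(z)/L}\prod_n E_n(z/a_n)$ (with $h$ replaced by $\tilde h$) collapses to the single numerical inequality $M/L+S<m/t$.

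It then remains to choose the parameters. Let $m_0$ be the smallest positive integer strictly greater than $tS$; any $m\ge m_0$ then satisfies $m/t-S>0$. Given such an $m$, set $L_0:=M/(m/t-S)+1$, so that every real $L\ge L_0$ yields $M/L<m/t-S$ and hence fulfils hypothesis~(iv). Conditions (i)--(iii) have already been arranged in the first step (note in particular that Theorem~\ref{Thm:2.1} ensures that the product defining $f$ is entire, since $p_n=n$ makes $\sum(a/|a_n|)^{n+1}$ summable for every $a>0$), so Theorem~\ref{Thm:3.1} delivers the conclusion that $f'$ has no zero in $B(0,t)\setminus\{0\}$.

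The only real obstacle is the bookkeeping in the construction of $\tilde h$: one must verify that the finitely many initial indices do not spoil convergence of $\sum 1/\tilde h(n)$ and that the denominators $\tilde h(n)-t$ are uniformly bounded below by a positive constant so that every term in $S$ is well defined. Both points follow immediately from the choices of $n_0$ and $b$, after which everything reduces to a direct, quantitative reading of the heuristic already given in Example~\ref{Ex:3.2}.
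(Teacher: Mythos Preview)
Your proposal is correct and follows essentially the same route as the paper: the paper derives Corollary~\ref{Crl:3.2} directly from Example~\ref{Ex:3.2}, which constructs exactly the same modified lower bound (using $h(n)-b$ on the tail and the uniform bound $|a_n|>ct$ on the finitely many initial indices) and then invokes Corollary~\ref{Crl:3.1}, whereas you bypass Corollary~\ref{Crl:3.1} and feed the modified function $\tilde h$ straight into Theorem~\ref{Thm:3.1}. The only cosmetic slip is that the entireness of $f$ comes from Theorem~\ref{Thm:2.2}, not Theorem~\ref{Thm:2.1}.
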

\section{Extension of our main result to higher order derivatives}
Now that we have seen some examples of application of our main result, we are interested in some extension of Theorem \ref{Thm:3.1}, in order to include higher order derivatives, as in [3]. However, the methods used in [3] cannot be applied anymore in our case, so we need to do something else. Here, we will give an algorithm which allows one to obtain a certain inequality which, if satisfied, guarantees the non-existence of critical points in a certain punctured ball. The problem of establishing explicit general assumptions which give a result in the form of Theorem \ref{Thm:3.1} thus remains open, as we will see in the next section.\\
The algorithm begins with the following definitions: we define
$$ A_1(z):=\frac{f'(z)}{f(z)} = \frac{m}{z} + g'(z) + \sum_{n=1}^{\infty} \left( \frac{1}{z-a_n}+ \sum_{k=1}^{p_n} \frac{z^{k-1}}{a_n^{k}} \right) $$
and, more generally:
$$ A_j(z):= \frac{f^{(j)}(z)}{f(z)}$$
(where $f^{(j)}$ is the $j$-th derivative of $f$) In what follows, the terms $A_j(z)$ will be denoted by $A_j$ for simplicity. Notice that:
$$ A_1'= \frac{f''}{f} - \left( \frac{f'}{f} \right)^2 = A_2 - A_1^2 $$
from which we deduce:
$$ A_2=A_1^2+A_1' $$
Proceed in this way:
$$ A_2'= \frac{f'''}{f}-\frac{f''}{f} \frac{f'}{f} = A_3 - A_2 A_1 $$
which implies:
$$ A_3= A_2' + A_2 A_1 $$
and so on. By induction, it can be easily shown that:
$$ A_k = A_{k-1}' + A_{k-1}A_1 $$
Clearly, we can rewrite each $A_k$ only in terms of $A_1$ and its derivatives. After doing this, recalling the explicit form of $A_1$ given in Section 3, we can write:
$$ A_k = L_{m/z} + r $$
where $L_{m/z}$ is the sum of all the terms having a factor of the form $ \frac{m^{\alpha}}{z^{\beta}}$, while $r$ contains all the remaining summands. This split allows us to proceed as follows: we consider, as before, two cases. The first one is the case where some $z$ with $0<|z|<t$ satisfies
$$ |L_{m/z} (z)| < |r(z)| $$
Then:
$$ |A_k| \geq |r(z)| - |L_{m/z}(z)| > 0 $$
so such $z$ is not a zero of $A_k$. Hence, we can only consider the case where $z$ satisfies:
$$ |L_{m/z} (z)| \geq |r(z)| $$
$r$ is a sum of series, derivatives of $g$ and products of such terms. By the triangle inequality, we can find a simpler sum (namely, an upper bound obtained analogously to the one in Section 3) which we can require to be $<$ of a lower bound for $|L_{m/z}|$. In order to do this, notice that we can factor out $\frac{1}{z^{\max \beta}}$ and give a lower bound of the absolute value of this, that is, $1/t^{\max \beta}$. If there is still something depending on $z$ in $|z^{\max \beta} L_{m/z}|$, we have to find a lower bound even for such term. After doing this, the inequality involving the upper and lower bounds that we have found can be seen as an inequality for $|g^{(k)}(z)|$, which, if satisfied, would guarantee that there is no nonzero root of $A_k$ with absolute value $<t$.\\
We show a concrete example of application of this algorithm, which establishes the case for the second derivative of $f$.
\begin{theorem}\label{Thm:4.1}
Let $f: \mathbb{C} \rightarrow \mathbb{C}$ be an entire function and suppose that it can be factorised via Weierstrass Theorem in such a way that:\\
(i) For all $n$, $|a_n| \geq h(n) > t$ for some function $h: \mathbb{N} \rightarrow \mathbb{R}^{+}$ and some constant $t>0$, with $\sum_{n=1}^{\infty} \frac{1}{h(n)} < +\infty$;\\
(ii) $p_n \not \rightarrow 0$ and, when $n \rightarrow +\infty$
$$ p_n \, \frac{t^{p_n -1}}{h(n)^{p_n - 1} \, (h(n)-t)^2} \sim 1/d(n) $$
where $d$ is such that $ \sum_{n=1}^{+\infty} 1/d(n) < +\infty $;\\
(iii) $m$ is a positive integer such that:
$$ m> 1 + 2t \left( \sum_{n=1}^{\infty} \frac{1}{h(n) - t}+ \sum_{n=1}^{\infty} \frac{(1-(t/h(n))^{p_n})}{h(n)-t} \right) $$
(iv) There exists a positive integer $p>1$ for which the following hold for all $z$ such that $|z|<t$:
$$|g'(z)| + \sum_{n=1}^{\infty} \frac{1}{h(n) - t} + \sum_{n=1}^{\infty} \frac{1-(t/h(n))^{p_n}}{h(n)-t} < \frac{m}{pt}$$
and
$$ |g''(z)| < \frac{m}{t^2} \left( m-1 - 2t \left( \sum_{n=1}^{\infty} \frac{1}{h(n) - t}+ \sum_{n=1}^{\infty} \frac{(1-(t/h(n))^{p_n})}{h(n)-t} \right) \right) - \sum_{n=1}^{\infty}  \frac{1}{(h(n)-t)^2}$$
$$ - \, \sum_{n=1}^{\infty} \frac{(p_n -1)(t/h(n))^{p_n+1} -p_n (t/h(n))^{p_n } + t/h(n)}{(t/h(n)-1)^2 \,t \, h(n)} - \frac{m^2}{p^2 t^2} $$
where the RHS is assumed to be $>0$.
\end{theorem}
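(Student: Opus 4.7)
My plan is to apply the algorithm prescribed in Section 4 with $k=2$. Writing $A_1(z) = m/z + B(z)$ where
$$B(z) := g'(z) + \sum_{n=1}^{\infty}\left[\frac{1}{z-a_n} + \sum_{k=1}^{p_n}\frac{z^{k-1}}{a_n^k}\right],$$
the recursion $A_2 = A_1^2 + A_1'$ yields
$$A_2(z) = \frac{m(m-1)}{z^2} + \frac{2mB(z)}{z} + B(z)^2 + B'(z),$$
so the prescribed split is $L_{m/z}(z) = m(m-1)/z^2 + 2mB(z)/z$ and $r(z) = B(z)^2 + B'(z)$. The algorithm's dichotomy disposes immediately of any $z$ with $|L_{m/z}(z)| < |r(z)|$, so the task reduces to proving $|L_{m/z}(z)| > |r(z)|$ for every $z$ with $0 < |z| < t$; this will give $|A_2(z)| > 0$ and hence $f''(z)\neq 0$ on $B(0,t)\setminus\{0\}$.

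For the upper bound on $|r|$ I would adapt the estimates from the proof of Theorem 3.1 to get $|B(z)|\leq |g'(z)| + S$, where $S$ denotes the sum of the two series appearing in (iv); hypothesis (iv) on $|g'(z)| + S$ then furnishes the key bound $|B(z)|^2 < m^2/(p^2 t^2)$. Differentiating $B$ term-by-term and evaluating the resulting finite inner geometric-type sum in closed form (the standard manipulation of $\sum_{k=2}^{p_n}(k-1)x^{k-2}$) produces
$$|B'(z)| \leq |g''(z)| + \sum_{n=1}^{\infty}\frac{1}{(h(n)-t)^2} + T,$$
where $T$ is precisely the third series on the right-hand side of (iv). Its convergence follows from the asymptotic in (ii), while the convergence of $\sum 1/(h(n)-t)^2$ follows from $\sum 1/h(n) < +\infty$ via $1/(h(n)-t)^2 \sim 1/h(n)^2$.

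For the lower bound on $|L_{m/z}|$ I would factor out $m/z^2$ and apply the reverse triangle inequality:
$$|L_{m/z}(z)| \geq \frac{m}{|z|^2}\bigl((m-1) - 2|z||B(z)|\bigr).$$
Substituting $|B(z)| \leq |g'(z)|+S$, using $|z| < t$, and invoking hypothesis (iii) (which is exactly the positivity of $m - 1 - 2tS$) produces a lower bound whose leading term is $m(m-1-2tS)/t^2$. Combining this estimate with the upper bound on $|r|$, the strict inequality $|L_{m/z}(z)| > |r(z)|$ reduces precisely to the inequality posited in (iv), closing the argument.

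The step I expect to be the most delicate is the lower bound on $|L_{m/z}|$ in the small-$|z|$ regime: the coarse estimate $1/|z|^2 > 1/t^2$ discards part of the $|z|^{-2}$-growth, and one has to choose the auxiliary parameter $p$ from (iv) large enough that the correction $-m^2/(p^2 t^2)$ on the right-hand side of (iv) genuinely absorbs the contribution of the cross-term $2mB(z)/z$ and of $|B(z)|^2$. The remaining work — convergence of the series, explicit summation of the geometric-type inner sum, and the triangle-inequality bookkeeping — is routine once the convergence criteria supplied by (i)--(iii) are in place.
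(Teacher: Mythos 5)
Your setup matches the paper's: the Section~4 algorithm with $k=2$, a split of $A_2=A_1'+A_1^2$ into an $L_{m/z}$ part and a remainder $r$, the bound $|B(z)|\leq|g'(z)|+S$ (with $S$ the sum of the two series appearing in (iii) and in the first inequality of (iv)), the consequence $|B(z)|^2<m^2/(p^2t^2)$, and the termwise bound on $|B'|$ producing $\sum_{n=1}^{\infty}1/(h(n)-t)^2$ plus the third series $T$ of (iv); the convergence arguments you cite are all fine. The genuine gap is in the cross term. Because your $B$ contains $g'$, your $L_{m/z}=m(m-1)/z^2+2mB(z)/z$ carries the summand $2mg'(z)/z$, so the lower bound $|L_{m/z}(z)|\geq \frac{m}{|z|^2}\bigl(m-1-2|z|\,|B(z)|\bigr)$ requires the positivity of $m-1-2t\bigl(|g'(z)|+S\bigr)$, not merely of $m-1-2tS$ as supplied by (iii). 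Carrying the estimate through, the inequality $|L_{m/z}(z)|>|r(z)|$ you must verify is
$$|g''(z)| < \frac{m}{t^2}\bigl(m-1-2tS\bigr)-\frac{2m}{t}\,|g'(z)|-\sum_{n=1}^{\infty}\frac{1}{(h(n)-t)^2}-T-\frac{m^2}{p^2t^2},$$
which is strictly stronger than the second inequality of (iv) by the term $\frac{2m}{t}|g'(z)|$. Your proposed remedy, choosing $p$ large enough that $-m^2/(p^2t^2)$ absorbs the cross term, is not available: $p$ is fixed by hypothesis (iv), and that correction is already fully spent on $|B(z)|^2<m^2/(p^2t^2)$; the first inequality of (iv) only yields $\frac{2m}{t}|g'(z)|<\frac{2m^2}{pt^2}$, which exceeds $\frac{m^2}{p^2t^2}$ for every $p\geq1$. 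So the claimed reduction ``precisely to (iv)'' does not close.

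For comparison, the paper's own proof keeps $g'$ out of the cross term, taking $L_{m/z}=\frac{m^2-m}{z^2}+\frac{2m}{z}\Sigma$ with $\Sigma$ denoting only the two series; this is why (iii) alone gives its lower bound $\frac{m}{t^2}(m-1-2tS)$. But then the term $\frac{2m}{z}g'(z)$ from expanding $A_1^2$ must sit inside $r$, and the paper's $r$ (hence its upper bound) omits it entirely. Your decomposition is the algebraically faithful one, and the fact that it does not land on (iv) reflects a term the stated hypotheses do not control; note that $\bigl|\frac{2m}{z}g'(z)\bigr|$ is not even bounded on $B(0,t)\setminus\lbrace 0\rbrace$ unless $g'(0)=0$, so it cannot simply be pushed into $r$ and estimated there. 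To complete the argument along your lines you would need either to strengthen the second inequality of (iv) by the additional term $\frac{2m}{t}|g'(z)|$ on its right-hand side, or to impose conditions such as $p\geq 3$ and $m$ large enough that $m-1-2m/p>0$, with a corresponding adjustment of the target inequality.
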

\begin{proof}
Using the above algorithm, we first notice that:
$$ A_2=A_1'+A_1^2 = \left( -\frac{m}{z^2} + g''(z) - \sum_{n=1}^{\infty}  \frac{1}{(z-a_n)^2} + \sum_{n=1}^{\infty} \sum_{k=1}^{p_n} (k-1) \frac{z^{k-2}}{a_n^{k}} \right) \, + $$
$$+ \, \left( \frac{m}{z} + g'(z) + \sum_{n=1}^{\infty} \frac{1}{z-a_n}+ \sum_{n=1}^{\infty} \sum_{k=1}^{p_n} \frac{z^{k-1}}{a_n^{k}} \right)^2 = \left( -\frac{m}{z^2} + \frac{m^2}{z^2} +2 \frac{m}{z} \left( \sum_{n=1}^{\infty} \frac{1}{z-a_n}+ \sum_{n=1}^{\infty} \sum_{k=1}^{p_n} \frac{z^{k-1}}{a_n^{k}} \right) \right) + $$
$$ + \, \left( g''(z) - \sum_{n=1}^{\infty}  \frac{1}{(z-a_n)^2} + \sum_{n=1}^{\infty} \sum_{k=1}^{p_n} (k-1) \frac{z^{k-2}}{a_n^{k}}  \, + \left( g'(z) + \sum_{n=1}^{\infty} \frac{1}{z-a_n}+ \sum_{n=1}^{\infty} \sum_{k=1}^{p_n} \frac{z^{k-1}}{a_n^{k}} \right)^2  \right) $$
Here we have, using the definitions above:
$$ L_{m/z}(z)= -\frac{m}{z^2} + \frac{m^2}{z^2} +2 \frac{m}{z} \left( \sum_{n=1}^{\infty} \frac{1}{z-a_n}+ \sum_{n=1}^{\infty} \sum_{k=1}^{p_n} \frac{z^{k-1}}{a_n^{k}} \right)$$
and
$$ r(z)= g''(z) \, - \, \sum_{n=1}^{\infty}  \frac{1}{(z-a_n)^2} \, + \, \sum_{n=1}^{\infty} \sum_{k=1}^{p_n} (k-1) \frac{z^{k-2}}{a_n^{k}}  \, + \, \left( g'(z) + \sum_{n=1}^{\infty} \frac{1}{z-a_n}+ \sum_{n=1}^{\infty} \sum_{k=1}^{p_n} \frac{z^{k-1}}{a_n^{k}} \right)^2$$
As noted above, we can suppose that $z$ satisfies:
$$ |L_{m/z}(z)| \geq |r(z)| $$
We now find a lower bound for $|L_{m/z}|$. We factor out $1/z^2$ so that:
$$ |L_{m/z}(z)|= \left| \frac{1}{z^2} \right| \left|-m + m^2 +2mz  \left( \sum_{n=1}^{\infty} \frac{1}{z-a_n}+ \sum_{n=1}^{\infty} \sum_{k=1}^{p_n} \frac{z^{k-1}}{a_n^{k}} \right) \right| \geq $$
$$ \geq \frac{m}{t^2} \left|-1 + m +2z  \left( \sum_{n=1}^{\infty} \frac{1}{z-a_n}+ \sum_{n=1}^{\infty} \sum_{k=1}^{p_n} \frac{z^{k-1}}{a_n^{k}} \right) \right|   $$
By assumption:
$$ \left|2z  \left( \sum_{n=1}^{\infty} \frac{1}{z-a_n}+ \sum_{n=1}^{\infty} \sum_{k=1}^{p_n} \frac{z^{k-1}}{a_n^{k}} \right) \right| <  2t \left( \sum_{n=1}^{\infty} \frac{1}{h(n) - t}+ \sum_{n=1}^{\infty} \frac{(1-(t/h(n))^{p_n})}{h(n)-t} \right) < m-1   $$
so that:
$$ |L_{m/z}(z)| > \frac{m}{t^2} \left( m-1 - 2t \left( \sum_{n=1}^{\infty} \frac{1}{h(n) - t}+ \sum_{n=1}^{\infty} \frac{(1-(t/h(n))^{p_n})}{h(n)-t} \right) \right) > 0 $$
Now that this lower bound has been found, we can proceed with the upper bound for $r$. Note that:
$$ |r(z)| \leq |g''(z)| \, + \, \sum_{n=1}^{\infty}  \frac{1}{(h(n)-t)^2} \, + \, \sum_{n=1}^{\infty} \sum_{k=1}^{p_n} (k-1) \frac{t^{k-2}}{h(n)^{k}}  \, + $$
$$+ \, \left( |g'(z)| + \sum_{n=1}^{\infty} \frac{1}{h(n)-t}+ \sum_{n=1}^{\infty} \frac{(1-(t/h(n))^{p_n})}{h(n)-t} \right)^2 $$
Then, again by hypothesis (notice that the second series converges by (ii)):
$$ |r(z)| \leq |g''(z)| \, + \, \sum_{n=1}^{\infty}  \frac{1}{(h(n)-t)^2} \, + \, \sum_{n=1}^{\infty} \frac{(p_n -1)(t/h(n))^{p_n+1} -p_n (t/h(n))^{p_n } + t/h(n)}{(t/h(n)-1)^2 \, t \, h(n)} + \frac{m^2}{p^2 t^2} $$
Finally, we can conclude that:
$$ \left| \frac{f''(z)}{f(z)} \right| \geq  |L_{m/z}(z)| - |r(z)| > \frac{m}{t^2} \left( m-1 - 2t \left( \sum_{n=1}^{\infty} \frac{1}{h(n) - t}+ \sum_{n=1}^{\infty} \frac{(1-(t/h(n))^{p_n})}{h(n)-t} \right) \right) $$
$$- \left( |g''(z)| \, + \, \sum_{n=1}^{\infty}  \frac{1}{(h(n)-t)^2} \, + \, \sum_{n=1}^{\infty} \frac{(p_n -1)(t/h(n))^{p_n+1} -p_n (t/h(n))^{p_n } + t/h(n)}{(t/h(n)-1)^2 \, t \, h(n)} + \frac{m^2}{p^2 t^2} \right) > 0$$
where the last strict inequality follows by the inequality in the assumptions.
\end{proof}
\section{Some open problems}
The results obtained in the previous sections leave some open problems on the zeros of derivatives of entire functions. In particular, we consider the following ones:
\begin{problem}\label{Prob:5.1}
Is there an optimal constant $t>0$ for which the result of Theorem \ref{Thm:3.1} holds true for all entire functions (with $m$ fixed) satisfying the hypothesis, indipendently of $h(n)$?
\end{problem}
This question arose when considering the special case of polynomials proved by Aziz and Argar. We think that the conditions are too general to have an optimal bound for all such functions, so we expect the answer to this question to be negative. However, we believe that at least the next problem has an affirmative answer:
\begin{problem}\label{Prob:5.2}
For fixed $h$ , $p_n$ and $m \geq 1$, does there exist an optimal value $t>0$ for which any entire $f$ with such a Weierstrass product representation has no critical point in $B(0,t) \setminus \lbrace 0 \rbrace$? If so, find an expression for $t$.
\end{problem}
Shifting the attention to the more general case of higher order derivatives, there is a particular question which needs to be answered in order to completely settle the problem, as we noticed in Section 4.
\begin{problem}\label{Prob:5.3}
Give explicit conditions which assure that the $k$-th derivative of a certain complex entire function $f$ has no roots in $B(0,t) \setminus \lbrace 0 \rbrace$ for some $t>0$. If possible, also find some optimal bounds (fixing $h$, $p_n$ and $m$).
\end{problem}
If solved, this would avoid the use of an algorithm like the previous one, which becomes longer to apply when dealing with higher orders. We remark that, with explicit conditions, we mean assumptions like the ones in Theorem \ref{Thm:4.1}. At the moment, indeed, the unique way to obtain such a result would be by direct evaluation of all the terms involved in the expression of $A_k$, which become too many even with quite low orders of derivation.
\section*{Conclusion}
In this paper we have extended some results of Aziz, Zargar and Ahmad on the zeros of derivatives of polynomials to complex entire functions. We think that the new open problems arising from our results, and in particular Problem \ref{Prob:5.3}, will stimulate further research on these topics.

\end{document}